\documentclass[11pt,a4paper]{article}
\usepackage{amsthm,amssymb,amsfonts,amsmath,lineno,mathtools,graphicx,endnotes,etoolbox,color,epsfig,xspace,algorithmic,algorithm,verbatim,tikz}
\usetikzlibrary{calc}

\def\caterpillar1{
\begin{tikzpicture}[scale=1.075,shorten >=1pt,->] 
\tikzstyle{vertex}=[circle,draw,minimum size=20pt,inner sep=0pt]
\foreach \name/\x in {0/0, 8/8} 
\node[vertex,fill=white] (\name) at (1.6*\x,0) {$u_\name$};
\foreach \name/\x in {1/1, 2/2, 3/3, 4/4, 5/5, 7/7} 
\node[vertex,fill=black!25] (\name) at (1.6*\x,0) {$u_\name$};
\foreach \name/\x in {6/6} 
\node[vertex,fill=black!5] (\name) at (1.6*\x,0) {$u_\name$};
\node[vertex,fill=white] (f1) at (0.6*1,-1.8) {};
\node[vertex,fill=white] (f2) at (1.6*1,-1.8) {};
\node[vertex,fill=white] (f3) at (2.6*1,-1.8) {};
\node[vertex,fill=white] (f4) at (1.45*4,-1.8) {};
\node[vertex,fill=white] (f4b) at (1.75*4,-1.8) {};
\node[vertex,fill=white] (f5) at (1.6*6,-1.8) {};
\node[vertex,fill=white] (f6) at (1.51*7,-1.8) {};
\node[vertex,fill=white] (f7) at (1.69*7,-1.8) {};
\draw (f1) -- node[left=5pt]{\scriptsize 7} (1);
\draw (f2) -- node[left=1pt]{\scriptsize 9} (1);
\draw (f3) -- node[right=2pt]{\scriptsize 11} (1);
\draw (4) -- node[left=2pt]{\scriptsize 5} (f4);
\draw (4) -- node[right=2pt]{\scriptsize 6} (f4b);
\draw (f5) -- node[left=3pt]{\scriptsize 12} (6);
\draw (7) -- node[left=2pt]{\scriptsize 10} (f6);
\draw (7) -- node[right=2pt]{\scriptsize 8} (f7);
\draw (0) -- node[above=5pt]{\scriptsize 4} (1);
\draw (2) -- node[above=5pt]{\scriptsize 16} (1);
\draw (2) -- node[above=5pt]{\scriptsize 3} (3);
\draw (4) -- node[above=5pt]{\scriptsize15} (3);
\draw (4) -- node[above=5pt]{\scriptsize 2} (5);
\draw (6) -- node[above=5pt]{\scriptsize 14} (5);
\draw (7) -- node[above=5pt]{\scriptsize 1} (6);
\draw (7) -- node[above=5pt]{\scriptsize 13} (8);
\draw (0) node[above=10pt]{\scriptsize -4};
\draw (1) node[above=10pt]{\scriptsize 47};
\draw (2) node[above=10pt]{\scriptsize -19};
\draw (3) node[above=10pt]{\scriptsize 18};
\draw (4) node[above=10pt]{\scriptsize -28};
\draw (5) node[above=10pt]{\scriptsize 16};
\draw (6) node[above=10pt]{\scriptsize -1};
\draw (7) node[above=10pt]{\scriptsize -32};
\draw (8) node[above=10pt]{\scriptsize 13};
\draw (f1) node[below=10pt]{\scriptsize -7};
\draw (f2) node[below=10pt]{\scriptsize -9};
\draw (f3) node[below=10pt]{\scriptsize -11};
\draw (f4) node[below=10pt]{\scriptsize 5};
\draw (f4b) node[below=10pt]{\scriptsize 6};
\draw (f5) node[below=10pt]{\scriptsize -12};
\draw (f6) node[below=10pt]{\scriptsize 10};
\draw (f7) node[below=10pt]{\scriptsize 8};
\end{tikzpicture}}

\def\path{
\begin{tikzpicture}[scale=1.09,shorten >=1pt,-] 
\tikzstyle{vertex}=[circle,draw,minimum size=24pt,inner sep=0pt]
\foreach \name/\x in {0/0,1/1, 2/2, 3/3, 4/4} 
\node[vertex,fill=black!0] (\name) at (1.7*\x,0) {\small $u_\name$};
\node[vertex,fill=black!0] (5) at (1.7*5,0) {\small $u_{k-2}$};
\node[vertex,fill=black!0] (6) at (1.7*6,0) {\small $u_{k-1}$};
\node[vertex,fill=black!0] (7) at (1.7*7,0) {\small $u_{k}$};
\draw (0) -- node[above=5pt]{\small $k_1$} (1);
\draw (1) -- node[above=5pt]{\small $m$} (2);
\draw (2) -- node[above=5pt]{\small $k_1-1$} (3);
\draw (3) -- node[above=5pt]{\small $m-1$} (4);
\draw[dotted] (4) -- (5);
\draw (5) -- node[above=5pt]{\small 1} (6);
\draw (6) -- node[above=5pt]{\small $k_2+1$} (7);

\end{tikzpicture}}

\textwidth=150mm \textheight=211mm  \oddsidemargin=0cm
\evensidemargin=0cm \topmargin=20mm  \headheight=0cm \headsep=0cm

\usepackage{color}

\patchcmd{\theendnotes}
  {\makeatletter}
  {\makeatletter}
  {}{}

\DeclarePairedDelimiter{\ceil}{\lceil}{\rceil}
\DeclarePairedDelimiter{\floor}{\lfloor}{\rfloor}

\newtheorem{theorem}{Theorem}
\newtheorem{claim}{Claim}

\newtheorem{definition}{Definition}

\newtheorem{conjecture}{Conjecture}

\begin{document}

\title{Caterpillars Have Antimagic Orientations}


\author{Antoni Lozano\thanks{Computer Science Department,
Universitat Polit\`ecnica de Catalunya, Catalonia, {\tt antoni@cs.upc.edu}. Supported by  project MTM2014-600127-P.}}

\maketitle


\begin{abstract}
\noindent
An {\em antimagic labeling} of a directed graph $D$ with $m$ arcs is a bijection from the set of arcs of $D$ to $\{1,\dots,m\}$ such that all oriented vertex sums of vertices in $D$ are pairwise distinct, where the {\em oriented vertex sum} of a vertex $u$ is the sum of labels of all arcs entering $u$ minus the sum of labels of all arcs leaving $u$. Hefetz, M\"utze, and Schwartz~\cite{HMS} conjectured that every connected graph admits an antimagic orientation, where an {\em antimagic orientation} of a graph $G$ is an orientation of $G$ which has an antimagic labeling. We use a constructive technique to prove that caterpillars, a well-known subclass of trees, have antimagic orientations.
\end{abstract}
\noindent
{\bf Keywords:} caterpillar, antimagic labeling, antimagic orientation.

\section{Introduction}\label{sec:int}

All graphs considered in this paper are finite and simple unless otherwise stated. A {\em labeling} of a graph $G$ with $m$ edges is defined as a bijection from the set of edges of $G$ to the set $\{1,\dots,m\}$. A labeling of $G$ is said to be {\em antimagic} if all vertex sums are pairwise distinct, where the {\em vertex sum} of a vertex $u$ in $G$ is the sum of labels of all edges incident with $u$. A graph is said to be {\em antimagic} if it admits an antimagic labeling. Hartsfield and Ringel conjectured in~\cite{HR} that all simple connected graphs, with the exception of $K_2$, are antimagic. Although antimagicness has been proved for graphs belonging to many classes, like regular graphs~\cite{BBV,CLPZ} or trees having more than three vertices and at most one vertex of degree two~\cite{LWZ}, the conjecture is still open even for bipartite graphs. For related results the reader is referred to the survey of Gallian~\cite{G}.

Hartsfield and Ringel's conjecture finds a natural variation in the setting of directed graphs. A {\em labeling} of a directed graph $D$ with $m$ arcs is a bijection from the set of arcs of $D$ to the set $\{1,\dots,m\}$. A labeling of $D$ is said to be {\em antimagic} if all oriented vertex sums are pairwise distinct, where the {\em oriented vertex sum} of a vertex $u$ in $D$ is the sum of labels of all arcs entering $u$ minus the sum of labels of all arcs leaving $u$. A graph is said to have an {\em antimagic orientation} if it has an orientation which admits an antimagic labeling. Hefetz, M\"utze, and Schwartz~\cite{HMS} formulate the following conjecture:

\begin{conjecture}\label{con:hef}
Every connected graph admits an antimagic orientation.
\end{conjecture}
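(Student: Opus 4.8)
The plan is to attack the conjecture in its full generality by isolating a sparse skeleton that we can control exactly, attaching the remaining edges as bounded perturbations, and falling back on a probabilistic argument in the dense regime. Fix a connected graph $G$ with $n$ vertices and $m$ edges, choose a spanning tree $T$ of $G$ rooted at a vertex $r$, and note that the non-tree edges number exactly $c = m - n + 1$. The first step is to orient and label $T$ so that the vertex sums induced by the tree alone are not merely distinct but \emph{well separated}: any two of them differ by more than $m$. The constructive caterpillar argument of this paper is the prototype for this step, showing how small labels can be placed along a spine so that sums march off in a controlled arithmetic pattern; the task is to promote that idea from caterpillars to arbitrary rooted trees, processing vertices by depth and assigning labels so that each subtree contributes a predictable, widely spaced offset.

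Assuming such a well-separated base, the second step processes the $c$ non-tree edges one at a time. Orienting a non-tree edge $uv$ as $u \to v$ with label $\ell$ shifts the oriented vertex sum $s_u$ down by $\ell$ and $s_v$ up by $\ell$, and affects no other vertex. Because the base sums differ by more than $m$ and every label lies in $\{1,\dots,m\}$, no single perturbation can carry one sum past another's original value by more than one gap; the real work is to show that the \emph{cumulative} effect of all $c$ edges, together with the choice of which labels are reserved for tree edges versus non-tree edges, can always be arranged to keep the $n$ sums pairwise distinct. I would control this by a counting argument: for each unordered pair of vertices there is a bounded set of label assignments that would make their sums collide, and one shows that the available freedom in orientation and label placement exceeds the total number of such forbidden configurations.

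For graphs that are too dense or too symmetric for the skeleton construction to separate sums cleanly, the plan is to switch to a probabilistic fallback. Choosing a uniformly random orientation and a uniformly random bijective labeling, the event that two fixed vertices have equal oriented vertex sum is governed by the labels on the edges incident to them, so these ``collision events'' have bounded dependency. A Lov\'asz Local Lemma estimate (or a concentration bound when the minimum degree is large) then gives positive probability that all sums are simultaneously distinct, yielding an antimagic orientation. Matching the two regimes requires only a threshold on the minimum degree above which the probabilistic bound succeeds.

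The main obstacle is reconciling the global bijection constraint with simultaneous distinctness at all $n$ vertices precisely where the graph is sparse and symmetric. The tightest vertices are those of degree one and two: a leaf's sum is exactly $\pm\ell$ for its single arc label $\ell$, leaving essentially no freedom, which is why even the caterpillar case demanded careful placement of the smallest labels. Guaranteeing a well-separated base for an arbitrary tree --- and hence for the spanning tree of an arbitrary connected graph --- without a uniform, canonical rule is the crux, and it is exactly this step that keeps the conjecture open. A complete proof will likely need either a single canonical orientation-and-labeling scheme valid for all trees, or a carefully engineered interface that lets the constructive method handle the low-degree, near-tree part of $G$ while the probabilistic method absorbs everything else.
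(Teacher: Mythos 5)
The statement you were asked about is Conjecture~\ref{con:hef}, which the paper does \emph{not} prove: it is an open conjecture of Hefetz, M\"utze, and Schwartz, and the paper's contribution is only the special case of caterpillars (Theorem~\ref{th:caterpillar}). Your text is accordingly a research programme rather than a proof, and you concede as much when you write that the crux ``keeps the conjecture open.'' So there is no complete argument here to validate. Beyond that, the first step of your plan is not merely unproved but impossible as stated. You ask for an orientation and labeling of a spanning tree $T$ under which the $n$ oriented vertex sums pairwise differ by more than $m$. A leaf of $T$ is incident with a single arc, so its oriented sum is $\pm\ell$ for one label $\ell\in\{1,\dots,m\}$ and hence lies in $[-m,m]$; if $T$ has more than two leaves (e.g.\ any spanning tree of a graph with three or more vertices of degree one, or a star), you cannot fit their sums into that window with pairwise gaps exceeding $m$. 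This is exactly why the paper does not attempt separation by large gaps even for caterpillars: instead it partitions the vertices into classes (light vertices, degree-one vertices, heavy vertices with and without incident heavy edges), proves distinctness \emph{within} each class by a monotonicity argument tied to the order in which labels are assigned, and shows the classes occupy disjoint weight intervals $[0,k_1-1]$, $[k_1,k_2+1]$, $[k_2+2,m+k_1]$, and $[m+k_1+1,\infty)$ --- intervals in which consecutive weights may differ by as little as $1$.

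The remaining steps inherit this gap and add their own. The ``counting argument'' for absorbing the $c$ non-tree edges is not carried out: you assert that the freedom in label placement exceeds the number of forbidden configurations without estimating either quantity, and the bijection constraint couples all of these choices globally. The probabilistic fallback likewise cannot be invoked via a routine Lov\'asz Local Lemma application, because under a uniformly random bijective labeling the collision events for different vertex pairs are not locally dependent --- every label placed anywhere constrains every other label. (The dense regime is genuinely settled in~\cite{HMS}, but by a dedicated argument, not by the sketch you give, and the two regimes do not meet: the hard open cases are precisely sparse graphs with many low-degree vertices.) In short, your proposal correctly identifies where the difficulty lies, but it does not prove the conjecture, and its central structural premise (a well-separated tree base) fails for essentially every tree.
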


In the same article~\cite{HMS}, Hefetz {\em et al.} prove Conjecture~\ref{con:hef} for stars, wheels,  cliques, and ``dense" graphs  (graphs of order $n$ with minimum degree at least $C \log n$, for an absolute constant $C$); in fact, the authors prove the stronger statement that every orientation is antimagic. Other classes for which Conjecture~\ref{con:hef} is known to hold are odd regular graphs~\cite{HMS}, even regular graphs~\cite{LSWYZ}, or biregular bipartite graphs~\cite{SY}. Actually, it is easy to see that all antimagic bipartite graphs admit an antimagic orientation where all edges are oriented in the same direction between the partite sets. For this reason, all subclasses of trees that are known to be antimagic admit antimagic orientations. However, a particular subclass for which this reasoning cannot be applied is that of caterpillars.

\begin{definition}
A {\em caterpillar} $C$ is a tree of order at least 3 the removal of whose leaves produces a path. 
\end{definition}

When some specific conditions on the number of leaves or on the vertex degrees are added, caterpillars are known to be antimagic~\cite{LMS,K} but, in the general case, antimagicness is still open for caterpillars. Here we use the flexibility given by the choice of an orientation to adapt the constructive technique from Lozano, Mora, and Seara~\cite{LMS} and prove that all caterpillars admit an antimagic orientation, supporting Conjecture~\ref{con:hef}.
 
\section{Main Result}

Let $G$ be a graph and let $u$ and $v$ be vertices of $G$. Then, $E(G)$ denotes the set of edges of $G$ and $\{u,v\}$ represents an edge between $u$ and $v$. An orientation of $G$ will be usually represented by $\vec{G}$, the set of arcs of $\vec{G}$ by $A(\vec{G})$ and an arc from $u$ to $v$ by $uv$. For any labeling $\phi$ of a graph $G$ and any orientation $\vec{G}$ of $G$, we use the notation $\phi(uv)$, for an arc $uv \in A(\vec{G})$, to mean $\phi(\{u,v\})$. This way, $\phi$ is used to define both the {\em vertex sum} $s(u)$ of $u$ in $G$ and the {\em oriented vertex sum} $\vec{s}(u)$ of $u$ in $\vec{G}$:
\[ s(u) = \sum_{\{u,v\} \in E(G)} \phi(\{u,v\}), \;\; \mbox{ and }\;\; \vec{s}(u) = \sum_{vu \in A(\vec{G})} \phi(vu) - \sum_{uv \in A(\vec{G})} \phi(uv). \]
For a vertex $u$ in $G$ and a subgraph $H$ of $G$, $s_H(u)$ represents the vertex sum of $u$ in $H$. Similarly, for a vertex $u$ in $\vec{G}$ and a subgraph $\vec{H}$ of $\vec{G}$, $\vec{s}_{\vec{H}}(u)$ denotes the oriented vertex sum of $u$ in $\vec{H}$. Given a caterpillar $C$, Theorem~\ref{th:caterpillar} constructs an orientation $\vec{C}$ of $C$ and an antimagic labeling for $\vec{C}$. In the proof, antimagicness will be the consequence of obtaining different absolute values for all oriented vertex sums. For convenience, then, we define the {\em weight} of a vertex $u$ in an oriented graph $\vec{G}$ as $w(u) = |\vec{s}(u)|$. For a subgraph $\vec{H}$ of $\vec{G}$, $w_{\vec{H}}(u) =  |\vec{s}_{\vec{H}}(u)|$ refers to the weight of $u$ in $\vec{H}$.

We also use the notation $[a,b] = \{a,a+1,\dots,b\}$ for any two integers $a, b$ such that $a \le b$.

\goodbreak

\begin{theorem}\label{th:caterpillar}
Every caterpillar admits an antimagic orientation. 
\end{theorem}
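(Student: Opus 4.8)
The plan is to give a single explicit construction. I would choose an orientation $\vec{C}$ together with a bijective labeling $\phi$ and then prove antimagicness by showing that the weights $w(u)=|\vec{s}(u)|$ are pairwise distinct; this suffices, since distinct absolute values force distinct signed sums. Write the spine as $u_1,\dots,u_n$, the path obtained by deleting the leaves, and let $\ell_i\ge 0$ be the number of leaves at $u_i$. Since a spine endpoint of degree $1$ in $C$ would itself be a leaf, we have $\ell_1,\ell_n\ge 1$; and if $n=1$ the caterpillar is a star, which is already known to admit an antimagic orientation~\cite{HMS}, so I assume $n\ge 2$. With $m$ edges in total there are $n-1$ spine edges and $L=\sum_i\ell_i=m-(n-1)$ leaf edges.

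The first step is to split the label range. I would reserve the $n-1$ extreme labels, namely the smallest $k_1$ and the largest $k_2$ with $k_1+k_2=n-1$, for the spine edges, and assign the $L$ middle labels $[k_1+1,\,m-k_2]$ to the leaf edges. Because each leaf is incident with a single arc, its weight equals that arc's label; hence giving the leaf edges distinct middle labels automatically makes all leaf weights pairwise distinct and confines them to $[k_1+1,\,m-k_2]$. This reduces the whole problem to the spine: it remains to force every spine weight to avoid the interval $[k_1+1,\,m-k_2]$ and to differ from every other spine weight.

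For the spine I would use a zigzag orientation that alternates sources and sinks along $u_1,\dots,u_n$, placing the reserved labels so that (reading along the path) each interior vertex is incident with exactly one large-labelled and one small-labelled arc of the same sign, as in the path template of the figure; the orientation at the last vertex is flipped to absorb the parity of $n$. Each interior spine vertex then has a weight whose magnitude is of the order of its large label, which generically lands above the leaf interval, while the terminal vertex is left with a single small-magnitude contribution. The leaves act as a fine adjustment: at each spine vertex I orient all its leaf edges the same way---inward at a sink, outward at a source---and hand them a consecutive block of middle labels, which simultaneously realises the distinct leaf weights above and shifts the host's weight by a controlled, sign-consistent amount.

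The main obstacle is the joint distinctness of the spine weights, and it is genuinely joint: the zigzag alone can assign equal weights to two vertices (for instance two sinks far apart), so the leaf blocks cannot be chosen independently of the spine but must be scheduled to break every such tie while keeping all leaf weights inside $[k_1+1,\,m-k_2]$ and all spine weights outside it. I expect the bulk of the work to be exactly this bookkeeping: bounding the weight accumulated at a high-degree endpoint so that it cannot coincide with an interior vertex, excluding collisions between a leaf-adjusted source and a leaf-adjusted sink, and handling the parity of $n$ together with the degenerate cases---very short spines, or spines in which most vertices carry no leaves---by separate, direct arguments.
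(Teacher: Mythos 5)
Your overall blueprint matches the paper's: put the extreme labels on a path, the middle labels on the pendant edges so that leaf weights are automatically distinct and confined to a middle interval, and reduce everything to separating the path-vertex weights from that interval and from each other. But the proposal has a genuine gap precisely where you wave at ``bookkeeping'': a uniformly additive scheme --- every spine vertex a source or sink, all its leaf arcs oriented to reinforce its path contribution --- cannot separate the weights. Concretely, with the alternating small/large labeling the leaf-less interior vertices already occupy a full consecutive block of weights up to the maximum path sum (roughly $m+k_1$ in your notation). A vertex whose two path labels sum to something near the bottom of that block and which carries exactly one leaf gains only one middle label, which is too small to lift it above the block (when the spine is long relative to the number of leaves, even the largest middle label does not suffice) and too large to keep it below; its weight lands inside the block and can collide with a leaf-less vertex. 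No scheduling of ``consecutive blocks of middle labels'' escapes this, because the obstruction is a range overlap, not a tie to be broken.

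The paper's fix is a structural idea absent from your proposal: it classifies exactly these problematic vertices (path sum less than $m$, exactly one pendant neighbour) as \emph{light}, orients the two path edges in the \emph{same} direction through them so that their path contributions nearly cancel (weight $k_2$ or $k_2+1$ rather than a large sum), and orients and labels the single pendant edge to cancel further, driving their weights \emph{down} into $[0,k_1-1]$, strictly below every other class. All remaining (\emph{heavy}) vertices are then safely additive, and a second concrete mechanism you also leave unspecified handles ties among them: at each heavy vertex all pendant edges but one receive arbitrary middle labels, and the last ones are assigned in increasing order to the vertices sorted by partial weight, which forces strictly increasing totals. Without the subtractive treatment of light vertices your construction fails on, e.g., a long path with a few pendant edges attached near the low-sum end of the spine, so this is a missing idea rather than deferred detail.
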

\begin{proof}
Let $C$ be a caterpillar with $m$ edges and $r$ leaves, and let $(u_0,\dots,u_{m-r+2})$ be a longest path in $C$. Then, we define the path $P = (u_0,\dots,u_k)$, where $k = m-r+2$ if $m-r$ is even and $k = m-r+1$ otherwise. The number of edges of $P$ is, therefore, even in any case. From here on we will refer to the edges of $P$ as {\em path edges}, and to the rest of edges in $C$ as {\em non-path} edges. 

We describe now an algorithm in seven steps to construct an orientation $\vec{C}$ of $C$ and a labeling $\phi: A(\vec{C}) \rightarrow [1,m]$ that will then be shown to be antimagic. 

\begin{enumerate}
\item {\em Defining the label set.}
Consider the division of the label set $L = [1,m]$ into the three subsets $L_1 = [1,k_1]$, $L_2 = [k_1+1,k_2]$, and $L_3 = [k_2+1,m]$,
where 
\[k_1 = \Bigl\lceil \frac{m-r+1}{2} \Bigr\rceil, \;\mbox{ and }\; k_2 = \Bigl\lceil \frac{m+r}{2} \Bigr\rceil - 1.\] 
Labels in $L_1$ and $L_3$ will be used for the path edges, and labels in $L_2$ for the non-path edges. The equality $k_1 + k_2 = m$ (see Claim~\ref{cl:m}) will be applied throughout the proof.

\item {\em Labeling the path edges.}
We assign the labels in $L_1$ and $L_3$ to the path edges in an alternating way. The largest label in $L_1$, $k_1$, is assigned to the edge $\{u_0,u_1\}$, then the largest label in $L_3$, $m$, is assigned to the next edge, $\{u_1,u_2\}$, then the previous labels are assigned to the next edges in $P$ and so on, keeping the alternation of labels until the first ones are reached. The sequence of labels assigned to the path edges is depicted in Figure~\ref{fig:path}. Formally, for $1 \le i < k$, the labeling is defined as:

\begin{equation*}
 \phi(\{u_i,u_{i+1}\}) =
    \begin{cases*}
      k_1-\frac{i}{2}, & \mbox{if $i$ is even} \\
      m - \frac{i-1}{2}, & otherwise
    \end{cases*}
\end{equation*}

\begin{figure}[h]
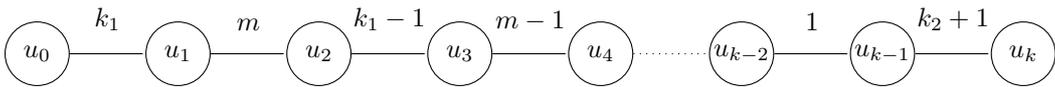

\centerline{\hglue 26pt \path}
\caption{Labeling of the path $P$.}
\label{fig:path}
\end{figure}

\item {\em Classifying the path vertices and the non-path edges.}
Let $u_i$ be a path vertex with degree at least two in $C$. Then, we call $u_i$ {\em light} if $s_P(u_i) < m$ and $u_i$ is adjacent to exactly one vertex not belonging to $P$; otherwise, $u_i$ is called {\em heavy}. Note that the path vertices of degree one, in particular $u_0$, and also $u_k$ when $m-r$ is even, have degree one in $C$ and, then, are neither light nor heavy.

A non-path edge is called {\em light} if it is incident with a light vertex, and {\em heavy} if it is incident with a heavy vertex. 

\item {\em Orienting the path edges.} The edge $\{u_0,u_1\}$ is oriented as the arc $u_0 u_1$. The edge $\{u_i,u_{i+1}\}$, for $0 < i < k$, is oriented in the ``same direction" in the path as the edge $\{u_{i-1}, u_i \}$  if $u_i$ is light and in ``contrary direction" if $u_i$ is heavy, that is, as:
\begin{enumerate}
\item $u_i u_{i+1}$ if either $u_{i-1} u_i \in A(\vec{C})$ and $u_i$ is light or $u_i u_{i-1} \in A(\vec{C})$ and $u_i$ is heavy,
\item $u_{i+1} u_i$ if either $u_{i-1} u_i \in A(\vec{C})$ and $u_i$ is heavy or $u_i u_{i-1} \in A(\vec{C})$ and $u_i$ is light.
\end{enumerate}
The oriented path $P$ will be represented with $\vec{P}$.

\item {\em Orienting the non-path edges.} 
Note that any non-path edge is incident with a light or a heavy vertex. We consider two cases for a non-path edge $\{u_i,v\}$ where $u_i$ is a path vertex and $v$ is a leaf:
\begin{enumerate}
\item if $u_i$ is light, we orient $\{u_i,v\}$ as $u_i v$ if 
$\vec{s}_{\vec{P}}(u_i) > 0$, and as $v u_i$ if $\vec{s}_{\vec{P}}(u_i) < 0$,
\item if $u_i$ is heavy, we orient $\{u_i,v\}$ as $u_i v$ if $\vec{s}_{\vec{P}}(u_i) < 0$, and as $v u_i$ if $\vec{s}_{\vec{P}}(u_i) > 0$.
\end{enumerate}
The goal of this orientation is that heavy edges help maximizing the weight of the heavy vertices they are incident with, while light edges help minimizing the weight of their respective light vertices.
 
\item {\em Labeling the light edges.} 
Let $u_{i_1}, u_{i_2}, \dots, u_{i_{n_l}}$ be an ordering of the $n_l$ light vertices such that $w_{\vec{P}}(u_{i_1}) \le w_{\vec{P}}(u_{i_2}) \le \dots \le w_{\vec{P}}(u_{i_{n_l}})$. Let $l_t$ be the light edge incident with $u_{i_t}$, for $1 \le t \le n_l$; then, we define $\phi(l_t) = k_2 - t + 1$.

\item {\em Labeling the heavy edges.}
For each heavy vertex $u_j$ with at least two incident heavy edges, we randomly assign unused labels from $L_2$ to all heavy edges incident with $u_j$ except one. Now, all heavy vertices must be incident with at most one heavy edge which has not yet been assigned a label. Let $u_{j_1},u_{j_2},\dots,u_{j_{n_h}}$ be an ordering of the $n_h$ heavy vertices having one incident still unlabeled heavy edge such that $w'(u_{j_1}) \le w'(u_{j_2}) \le \dots \le w'(u_{j_{n_h}})$, where $w'(u_{j_1}), w'(u_{j_2}), \dots, w'(u_{j_{n_h}})$ are the partial weights calculated with the labels assigned so far. Let $h_t$ be the still unlabeled heavy edge incident with $j_t$, for $1 \le t \le n_h$; then, we define $\phi(h_t)$ as the $t$-th smallest unused label in $L_2$.
\end{enumerate}

Let $\vec{C}$ be the orientation of $C$ defined above. Now, we establish some facts before proving that $\phi$ is an antimagic labeling of $\vec{C}$.

\begin{claim}\label{cl:m}
It holds that $k_1+k_2=m$.
\end{claim}
\begin{proof}
We use well-known transformations of the ceiling and floor functions~\cite{GKP}, as indicated in the side annotations for a real $x$ and an integer $n$:
\begin{align*}
m &= m + r - r \\
&= \Bigl\lfloor \frac{m+r}{2} \Bigr\rfloor +
\Bigl\lceil \frac{m+r}{2}\Bigr\rceil - r && \text{since $n = \floor{n/2}+\ceil{n/2}$} \\
&= \Bigl\lfloor \frac{m-r}{2} \Bigr\rfloor +
\Bigl\lceil \frac{m+r}{2} \Bigr\rceil && \text{since $\floor{x}+n = \floor{x+n}$} \\
&= \Bigl\lceil \frac{m-r-1}{2} \Bigr\rceil +
\Bigl\lceil \frac{m+r}{2} \Bigr\rceil && \text{since $\Big\lfloor\frac{n}{2}\Big\rfloor = \Big\lceil\frac{n-1}{2}\Big\rceil$} \\
&= \Bigl\lceil \frac{m-r+1}{2} \Bigr\rceil +
\Bigl\lceil \frac{m+r}{2} \Bigr\rceil - 1 && \text{since $\ceil{x}+n = \ceil{x+n}$} 
\end{align*}
\end{proof}

\goodbreak

\begin{claim}\label{cl:constant}
For any $t$ such that $1 \le t \le n_l$, it holds that $k_2 \le w_{\vec{P}}(u_{i_t}) \le k_2 + 1$.
\end{claim}
\begin{proof}
Let  $u_{i_t}$ be a light vertex. If $i_t = k =  m-r+1$ (in the case $m-r$ is odd), then we know that $u_{i_t}$ is an endpoint of the path and is incident with a light edge; therefore, $w_{\vec{P}}(u_{i_t}) = k_2+1$ from step 2 in the algorithm. Otherwise, since $u_0$ is not a light vertex, suppose $u_{i_t}$ is not an endpoint of the path. The orientation of the edges done in step 4 implies that the weight of $u_{i_t}$ in the path, $w_{\vec{P}}(u_{i_t})$, is the result of subtracting the labels of the edges incident with $u_{i_t}$ (defined in step 2):
\[ w_{\vec{P}}(u_{i_t}) = |\phi(\{u_{i_t-1},u_{i_t}\}) - \phi(\{u_{i_t},u_{i_t+1}\})|. \]
If $i_t$ is even, then
\[ w_{\vec{P}}(u_{i_t}) = \Big|\Big(m - \frac{(i_t-1)-1}{2}\Big) - \Big(k_1 - \frac{i_t}{2}\Big)\Big| 
= (m- i_t/2 + 1) - (k_1 - i_t/2) 
\]\[
= m - k_1 + 1 = k_2 + 1,\]
where Claim~\ref{cl:m} has been applied at the last equality. Similarly, if $i_t$ is odd, then
\[ w_{\vec{P}}(u_{i_t}) = \Big|\Big(k_1-\frac{i_t-1}{2}\Big) - \Big(m - \frac{i_t-1}{2}\Big) \Big|
= |k_1 - m| = |-k_2| = k_2. \]
Therefore, $k_2 \le w_{\vec{P}}(u_{i_t}) \le k_2 + 1$ for any light vertex $u_{i_t}$.
\end{proof}

\begin{claim}\label{cl:light}
It holds that $n_l \le k_1 - 1$.
\end{claim}
\begin{proof}
For any light vertex $u_i$ it holds that $s_P(u_i) < m$. Since all possible vertex sums in the path that are smaller than $m$ are $k_2+1, k_2+2, \dots, k_2+k_1-1 = m-1$, the maximum number of light vertices $n_l$ must necessarily be at most $k_1-1$.
\end{proof}

Now we prove that $\phi$ is an antimagic labeling of $\vec{C}$. We do this by classifying the vertices into different classes depending on their degree and on the fact of being light or heavy. Then, we show that the weights are pairwise distinct inside each class and that the sets of weights for the classes are pairwise disjoint. As a conclusion, any oriented vertex sums at two different vertices in $\vec{C}$ must be distinct because, otherwise, their absolute values (that is, their weights) would coincide.

\subsubsection*{Light vertices}

Note that, as a consequence of the orientation (step 5) and labeling (step 6) of the light edges, the weight of a light vertex $u_{i_t}$, for $1 \le t \le n_l$, is $w(u_{i_t}) = |w_{\vec{P}}(u_{i_t}) - (k_2-t+1)|$. 

In the first place, we observe that the weights of any two light vertices $u_{i_s}$ and $u_{i_t}$, with $1 \le s < t \le n_l$, must be different:
\begin{align*}
w(u_{i_s}) &= |w_{\vec{P}}(u_{i_s}) - (k_2-s+1)| \\
&= w_{\vec{P}}(u_{i_s}) - (k_2-s+1) && \text{since $w_{\vec{P}}(u_{i_s}) \ge k_2$ by Claim~\ref{cl:constant}}\\ 
&\le w_{\vec{P}}(u_{i_t}) - (k_2-s+1) && \text{since $w_{\vec{P}}(u_{i_s}) \le w_{\vec{P}}(u_{i_t})$ by assumption (step 6)} \\
&= |w_{\vec{P}}(u_{i_t}) - (k_2-s+1)| && \text{since $w_{\vec{P}}(u_{i_t}) \ge k_2$ by Claim~\ref{cl:constant}}\\ 
&< |w_{\vec{P}}(u_{i_t}) - (k_2-t+1)| \\
&= w(u_{i_t}).
\end{align*}
In the second place, we show that the weights of light vertices are distinct from the weights of the rest of vertices in $C$. The smallest weight of a light vertex is that of $u_{i_1}$, $w(u_{i_1}) = |w_{\vec{P}}(u_{i_1}) - k_2|$, which is 0 or 1 by Claim~\ref{cl:constant}. As for the largest weight of a light vertex, we have
\begin{align*}
w(u_{i_{n_l}}) &= |w_{\vec{P}}(u_{i_{n_l}}) - (k_2-n_l+1)| \\
&\le |(k_2+1) - (k_2-n_l+1)| && \text{by Claim~\ref{cl:constant}} \\
&\le |(k_2+1) - (k_2 - (k_1-1) + 1)| && \text{by Claim~\ref{cl:light}} \\
&= k_1 - 1.
\end{align*}
Therefore, the weights of light vertices are pairwise distinct and belong to $[0,k_1-1]$.

\subsubsection*{Vertices of degree one}

Vertices of degree one that belong to the path are at most two: $u_0$, with weight $k_1$, and $u_k$ if $m-r$ is even, in which case $w(u_k) = k_2+1$. Vertices of degree one not belonging to the path, however, have a weight corresponding to the value of a label in $L_2$, hence belonging to the set $[k_1+1,k_2]$. Clearly, then, all weights of vertices of degree one are pairwise distinct and belong to the set $[k_1,k_2+1]$, being larger than the weights of light vertices.

\subsubsection*{Heavy vertices with no incident heavy edge}

Let $u_j$ be a heavy vertex with no incident heavy edge. Since heavy vertices have degree at least two in $C$, $u_j$ must have two path vertices as neighbors and, then, its incident path edges have been oriented in contrary directions in step 4, that is, either as $u_{j-1} u_j$ and $u_{j+1} u_j$ or as $u_j u_{j-1}$ and $u_j u_{j+1}$. Then, the labeling $\phi$ defined in step 2 (see also Figure~\ref{fig:path}) ensures that if $u_j$ and $u_{j'}$ are two heavy vertices with no incident heavy edge, for $0 < j < j' < k$, we have $w(u_j) > w(u_{j'})$,  being $m+k_1$ the largest weight and $k_2+2$ the smallest one. Therefore, the weights of all heavy vertices with no incident heavy edge are pairwise distinct and belong to $[k_2 + 2, m+k_1]$, being larger than the weights of light vertices and of vertices of degree one.

\subsubsection*{Heavy vertices with incident heavy edges}

The orientation of a heavy edge defined in step 5 ensures that any label assigned to it contributes to an increase in the weight of the heavy vertex it is incident with. Consider the list of nondecreasing partial weights $w'(u_{j_1}) \le w'(u_{j_2}) \le \dots \le w'(u_{j_{n_h}})$ from step 7, and consider two partial weights $w'(u_{j_s})$ and $w'(u_{j_t})$ from this list, with $1 \le s < t \le n_h$; remember also that $h_s$ and $h_t$ are the unlabeled heavy edges incident with, respectively, $u_{j_s}$ and $u_{j_t}$. Then, we have that
\[ w(u_{j_s}) = w'(u_{j_s}) + \phi(h_s) < w'(u_{j_s}) + \phi(h_t) \le w'(u_{j_t}) + \phi(h_t) = w(u_{j_t})\]
and, therefore, the weights of any two different heavy vertices $u_{j_s}$ and $u_{j_t}$ are always different. 

Moreover, we can argue that the weight of any heavy vertex $u_j$ with incident heavy edges is $w(u_j) \ge m+k_1+1$. We consider the two possibilities for a heavy vertex $u_j$ according to its definition:
\begin{enumerate}
\item {\em $w_{\vec{P}}(u_j) < m$ and $u_j$ has at least two incident heavy edges.} In this case, $w_{\vec{P}}(u_j) \ge k_2+1$ and the sum of the weights from its incident heavy edges must be at least $2k_1+3$, the sum of the two smallest labels in $L_2$. Then, $w(u_j) \ge (k_2+k_1)+k_1+4 = m+k_1+4$.
\item {\em $w_{\vec{P}}(u_j) \ge m$ and $u_j$ has at least one incident heavy edge.} In this case, the sum of the weights from its incident heavy edges is at least  $k_1+1$, the value of the smallest label in $L_2$. Then, $w(u_j) \ge m+k_1+1$.
\end{enumerate}

Therefore, in any case we have that $w(u_j) \ge m+k_1+1$, which is larger than the weight of any other vertex in the previous classes. 
\end{proof}


\medskip

\begin{figure}[h]
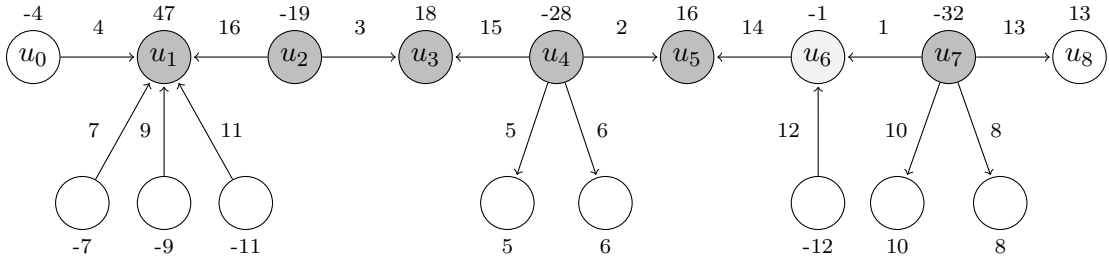

\centerline{\caterpillar1}
\caption{Antimagic orientation of a caterpillar. The only light vertex is $u_6$, colored in light grey; heavy vertices are colored in dark grey.}
\label{fig:example}
\end{figure}

As an example of Theorem~\ref{th:caterpillar}, consider the caterpillar depicted in Figure~\ref{fig:example}. In the notation of the theorem, it has $m=16$ edges and $r=10$ leaves. Since a longest path has 8 edges, which is even, we define $P$ as $(u_0,\dots,u_8)$ without shrinking the longest path by one. In step 1 we set the values $k_1 = 4$, and $k_2 = 12$ and divide the label set $[1,16]$ into $L_1 = [1,4]$, $L_2 = [5,12]$, and $L_3 = [13,16]$. Then, $P$ is labeled alternating the labels from $L_1$ and $L_3$, as indicated in step 2. Now, step 3 identifies vertex $u_6$ as light because $s(u_6) = 15 < m$ (its incident path edges are then oriented in the same direction in step 4), while the rest of path vertices which are not endpoints are defined as heavy vertices (and their incident path edges oriented in contrary directions in step 4). In step 5, heavy edges are oriented in the direction that maximizes the weight of their respective heavy vertices, while the only light edge (incident with $u_6$) is oriented in such a way that the weight of $u_6$ be minimized. This is now done in step 6, where label 12, the largest one in $L_2$, is assigned to the light edge incident with $u_6$. Step 7 starts assigning random labels from $L_2$ to four heavy edges, labels $7$, $9$, $5$, and $10$ in the example, leaving one heavy edge unlabeled for each of the vertices $u_1$, $u_4$, and $u_7$. The way to assign the remaining labels to the still unlabeled heavy edges is the following. First, calculate the list of partial weights of the heavy vertices having incident heavy edges by nondecreasing weight: $w'(u_4) = 22$, $w'(u_7) = 24$, and $w'(u_1) = 36$. Then, assign the remaining labels by increasing value (6, 8, and 11) to the unlabeled heavy edge incident with each vertex in the list. The final weights are strictly increasing in the same ordering and, therefore, pairwise distinct.

\end{document}